\date{\today}
\theoremstyle{plain}
\newtheorem{thm}{Theorem}[section]
\newtheorem{lem}[thm]{Lemma}
\newtheorem{prop}[thm]{Proposition}
\theoremstyle{definition}
\theoremstyle{remark}
\newtheorem{rem}{Remark}[section]
\numberwithin{equation}{section}
\def\be{\begin{eqnarray}}
\def\ee{\end{eqnarray}}
\def\ft{\frac d{dt}}
\def\pt{\partial_t}
\def\r{{\mathbb{T}}^3}
\def\md{\mbox{div}}
\def\c{\mbox{curl\,}}
\begin{document}

\title[Quasi-neutral limit of Euler-Poisson system coupled to a magnetic field] {Quasi-neutral
limit of Euler-Poisson system of compressible fluids coupled to a
magnetic field}

\author{Jianwei Yang}
\address{
 College of  Mathematics and Information Science,
North China University of  Water Resources and Electric Power,
                                      Zhengzhou 450045, Henan Province, P. R.
                                      China}
 \email{yangjianwei@ncwu.edu.cn}

\keywords{Euler-Poisson system, magnetic field, Quasi-neutral limit,
Incompressible magnetohydrodynamic equations, Energy estimate }

\subjclass[2010]{35B25, 35Q35, 35Q40}

\begin{abstract}

  In this paper, we consider the quasi-neutral limit of a three dimensional
Euler-Poisson  system of compressible fluids coupled to a magnetic
field. We prove that, as Debye length tends to zero, periodic
initial-value problems of the model have unique smooth solutions
existing in the time interval  where the ideal incompressible
magnetohydrodynamic equations has smooth solution. Meanwhile, it is
proved that smooth solutions converge to solutions of incompressible
magnetohydrodynamic equations with a sharp convergence rate in the
process of quasi-neutral limit.

\end{abstract}

\maketitle

\section{ Introduction }

The main objective of this paper is to study the quasi-neutral limit
of the following Euler-Poisson system of compressible fluids coupled
to a magnetic field \cite{fls15,st04}:
\begin{align}
             &\pt n^{\lambda}+\md (n^{\lambda}u^{\lambda})=0 \:\:\mbox{in}\:\:\r, t>0,\label{epm1}\\
             &\displaystyle\pt (n^{\lambda}u^{\lambda})+\md(n^{\lambda}u^{\lambda}\otimes u^{\lambda})
             +\nabla p(n^{\lambda})
 =n^{\lambda}\nabla\phi^{\lambda}+\c B^{\lambda}\times B^{\lambda}, \label{epm2}\\
&\pt B^{\lambda}-\c(u^{\lambda}\times B^{\lambda})=0,\label{epm3}\\
&\md B^{\lambda}=0,\label{epm4}\\
&\lambda^2\Delta\phi^{\lambda}=n^{\lambda}-1,\label{epm5}
\end{align}
with initial conditions:
\begin{equation}\label{epmi}
    n^{\lambda}(\cdot,0)=n^{\lambda}_{0}, u^{\lambda}(\cdot,0)=u^{\lambda}_{0}, B^{\lambda}(\cdot,0)
    =B^{\lambda}_{0}.
\end{equation}
In the above equations, $\r$ is 3-dimensional torus and $\lambda>0$
is
 the (scaled)  Debye length.
  The unknown functions are the density  $n^{\lambda}$,
the velocity $u^{\lambda} =
(u^{\lambda}_1,u^{\lambda}_2,u^{\lambda}_3)$, the magnetic field
$B^{\lambda} = (B^{\lambda}_1,B^{\lambda}_2,B^{\lambda}_3)$ and the
gravitational potential $\phi^{\lambda}$. Throughout this paper, we
assume that the pressure function $p(n^{\lambda})$ satisfies the
usual $\gamma-$law,
$$p(n^{\lambda})=\frac{(n^{\lambda})^\gamma}{\gamma},
n^\lambda>0,$$ for  some constant $\gamma>1$. It is obvious that
equations\eqref{epm4} is redundant with equations \eqref{epm3}, as
soon as they are satisfied by the initial conditions $\md
B^{\lambda}_0=0$. System \eqref{epm1}-\eqref{epm5} is used to model
the evolution of a magnetic stars \cite{cg68}. The effects of
magnetic fields arise in some physically interesting and important
phenomena in astrophysics; e.g. solar flares.  Without taking
magnetic effects into account, system \eqref{epm1}-\eqref{epm5}
reduces to the Euler-Poisson equations.

In recent years,  the  quasi-neutral  limit ($\lambda\rightarrow0$)
of various models has attracted much attention. In particular, the
limit $\lambda\rightarrow0$ has been performed in Vlasov-Poisson
system by Brenier \cite{b00}, Grenier \cite{g95,g96,g97} and
Masmoudi \cite{m01}, in drift-diffusion equations by Gasser et al.
\cite{glms01,ghmw02} and J\"{u}ngel and Peng \cite{jp01}, and in the
one dimensional and isothermal Euler-Poisson system by Cordier and
Grenier \cite{cg00}, in more general isentropic models by
 Wang \cite{w04}, in
non-isentropic Euler-Poisson equations by Peng et al. \cite{pwy06}
and Li \cite{l08}, in Euler-Monge-Amp\`{e}re systems by Loeper
\cite{l05},   in Navier-Stokes-Poisson system by Wang and Jiang
\cite{ws06}, Donatelli and Marcati \cite{dm12} and Ju et al.
\cite{jll09}, in quantum hydrodynamics equations \cite{ll05}, in
Navier-Stokes-Fourier-Poisson system by Li et al.\cite{ljx15}, etc.
As far as we know, there is no result on quasi-neutral limit of the
Euler-Poisson system coupled to a magnetic field
\eqref{epm1}-\eqref{epm5}.

In this paper, we will study the quasi-neutral limit for the smooth
solution of the system \eqref{epm1}-\eqref{epm5} in the framework of
the convergence-stability principle developed in \cite{y2001}.
Formally, taking the (scaled)  Debye length $\lambda\rightarrow0$ in
\eqref{epm5}, we obtain the following ideal incompressible
magnetohydrodynamic equations
\begin{align}
  &\pt u^0+(u^0\cdot\nabla)u^0
             +\nabla p^0=\c B^0\times B^0,\label{mhd1}\\
 & \pt B^0-\c(u^0\times B^0)=0,\label{mhd2}\\
 & \md u^0=\md
B^0=0.\label{mhd3}
\end{align}

The objective of this paper is to make this limit rigorous. Our
proof requires the (local) existence of a smooth solution to
\eqref{epm1}-\eqref{epm5}, which is shown in next section. The proof
of our result is based on the convergence-stability principle
developed by Yong \cite{y99,y2001} for singular limit problems of
symmetrizable hyperbolic systems.
 In
contrast with the results in
\cite{b00,cg00,dm12,jll09,ll05,l08,ljx15,l05,w04,ws06}, where the
limiting equations are incompressible Euler equations or the
incompressible Navier-Stokes equations, our limiting equations are
the incompressible magnetohydrodynamic equations
\eqref{mhd1}-\eqref{mhd3}. In our case where the Euler-Poisson
equations are coupled to a magnetic field, the problem becomes more
challenging.  Because of the magnetic field and non-linearity terms,
some elaborated energy analysis are required to obtain the desired
convergence results.

This paper is organized as follows. In section 2, we rewrite the
system \eqref{epm1}-\eqref{epm5} as a symmetrizable hyperbolic
system to obtain the local-in-time existence result, and present our
main results.  The proof of Theorem \ref{thm} is obtained in section
3.

Before ending the introduction, we give the notation and Lemma used
throughout the current paper. The letters $C$ and $C_T$ denote
various positive constants independent of $\lambda$, which can be
different from one line to another one, but $C_T$  may depend on
$T$. The symbol \lq\lq:\rq\rq means summation over both matrix
indices. $|U|$   denotes some norm of a vector or matrix $U$. Also,
we denote
$$\|\cdot\|=\|\cdot\|_{L^2(\r)},\,\,\|\cdot\|_{\infty}=\|\cdot\|_{L^\infty(\r)},\,\,
\|\cdot\|_{k}=\|\cdot\|_{H^k(\r)}, \,\,k\in\mathbb{N}^{\ast}.$$

\begin{lem}(\emph{see, e.g.\cite{majda}}). Let $s, s_1$, and $s_2$ be three nonnegative integers and $s_0
=\left[\frac{d}{2}\right] + 1$.

\emph{1.} If $f,  g\in H^s\cap L^\infty$ and any nonnegative multi
index $\beta, |\beta|\le s$, then we have \be \|D^\beta (fg)\|\le
C_s (\|f\|_{L^\infty}\|D^\beta g\|+\|g\|_{L^\infty}\|D^\beta
f\|)\leq c_s\|f\|_s\|g\|_s.\notag\ee

\emph{2.} If $f\in H^s$, $Df\in L^\infty$,$g\in H^{s-1}\cap
L^\infty$, then we have \be\|D^\beta (fg)-fD^\beta g\|\le C_s (\|D
f\|_{L^\infty}\|D_x^{\beta'}g\|+\|g\|_{L^\infty}\|D^\beta f\|),\:
|\beta'|=|\beta|- 1. \notag\ee

\emph{3.} Let $s_3=\min\{s_1,s_2,s_1+s_2-s_0\}\geq0$, then
$H^{s_1}H^{s_2}\subset H^{s_3}$. Here the inclusion symbol $\subset$
implies the continuity of the embedding.

\emph{4.}   Suppose  $s\geq s_0$, $A\in C_b^s(G)$, and $U\in
H^s(\Omega,G)$. Then   $A(U(\cdot))\in H^s$ and
$$\|A(U(\cdot))\|_s\leq C_s|A|_s(1+\|U\|^s_s).$$
Here and below, $C_s$ denotes a generic constant depending only on
$s$ and $d$, and $|A|_s$ stands for $\sup\limits_{\{U\in
G,|\alpha|\leq s\}}|\partial^{\alpha}_{U}A(U)|$.
\end{lem}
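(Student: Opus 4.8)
The plan is to obtain all four assertions from the classical Moser-type calculus on the torus $\r$, whose only ingredients are the Leibniz formula, the Gagliardo--Nirenberg interpolation inequalities, H\"older's inequality and Young's inequality; since the statement is quoted from \cite{majda} the point here is only to recall the mechanism of the proof.

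For part~1, I would expand by the Leibniz rule, $D^\beta(fg)=\sum_{\gamma\le\beta}\binom{\beta}{\gamma}D^\gamma f\,D^{\beta-\gamma}g$, and estimate a generic summand with $|\gamma|=j$ (and $|\beta-\gamma|\le s-j$) by H\"older with the $L^2$-conjugate pair $\bigl(\tfrac{2s}{j},\tfrac{2s}{s-j}\bigr)$, followed by the Gagliardo--Nirenberg bounds $\|D^\gamma f\|_{L^{2s/j}}\le C\|f\|_{L^\infty}^{1-j/s}\|D^s f\|^{j/s}$ and $\|D^{\beta-\gamma}g\|_{L^{2s/(s-j)}}\le C\|g\|_{L^\infty}^{j/s}\|D^s g\|^{1-j/s}$. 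Multiplying these and applying Young's inequality with exponents $\tfrac{s}{s-j}$ and $\tfrac{s}{j}$ converts the mixed product into $\|f\|_{L^\infty}\|D^s g\|+\|g\|_{L^\infty}\|D^s f\|$; the endpoint terms $j=0$ and $j=s$ are handled directly. Summing over $\gamma$ gives the first inequality, and the second follows from the Sobolev embedding $H^s\hookrightarrow L^\infty$ (valid since we work with $s\ge s_0$).

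Part~2 is the Kato--Ponce commutator estimate and is proved the same way starting from $D^\beta(fg)-fD^\beta g=\sum_{0<\gamma\le\beta}\binom{\beta}{\gamma}D^\gamma f\,D^{\beta-\gamma}g$: the terms with $|\gamma|=1$ are bounded at once by $\|Df\|_{L^\infty}\|D^{\beta'}g\|$, while for $|\gamma|=j\ge2$ one runs the interpolation argument of part~1 after peeling off one derivative from $f$, i.e. interpolating $D^\gamma f$ between $Df\in L^\infty$ and $D^{|\beta|}f\in L^2$ and $D^{\beta-\gamma}g$ between $g\in L^\infty$ and $D^{|\beta|-1}g\in L^2$, the exponents again summing correctly by Young's inequality. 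For part~3 one writes the product $fg$, distributes the $s_3$ derivatives over the two factors, and places each factor in the Sobolev--Lebesgue space dictated by the number of derivatives on it; the definition $s_3=\min\{s_1,s_2,s_1+s_2-s_0\}$ is precisely the condition that keeps every resulting H\"older exponent admissible (this can equivalently be read off a Littlewood--Paley decomposition). Finally, part~4 follows from the Fa\`a di Bruno formula: $D^\alpha\bigl(A(U(\cdot))\bigr)$ is a finite sum of terms $(\partial_U^k A)(U)\prod_{i=1}^{k}D^{\beta_i}U$ with $\sum_i|\beta_i|=|\alpha|\le s$; bounding $\partial_U^k A$ by $|A|_s$ and applying part~1 iteratively to the product yields $\|A(U(\cdot))\|_s\le C_s|A|_s(1+\|U\|_s^s)$.

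The only place demanding care is the bookkeeping of the interpolation exponents in parts~1 and~2 — one must verify that each H\"older pair is $L^2$-conjugate and that each Gagliardo--Nirenberg exponent lies in the admissible range — but on the compact manifold $\r$ this is entirely routine, so no genuine obstacle is present, which is why the lemma is simply cited.
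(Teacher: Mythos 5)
The paper does not prove this lemma at all---it is quoted verbatim from Majda's book \cite{majda}---and your sketch reproduces exactly the standard argument given there (Leibniz expansion plus Gagliardo--Nirenberg interpolation and Young's inequality for parts 1--2, distribution of derivatives with Sobolev embedding for part 3, Fa\`a di Bruno with part 1 for part 4), with the exponent bookkeeping done correctly. So your proposal is correct and takes essentially the same approach as the paper's cited source; nothing further is needed.
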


\section{Main Results}
First, we consider the local existence of smooth solution of the
system \eqref{epm1}-\eqref{epm5} for any fixed $\lambda>0$.

By Green's formulation, it follows from \eqref{epm1} and
\eqref{epm5} that
\begin{align}\label{phi}
   \nabla\phi^\lambda=\frac{1}{\lambda^2}\left(\nabla\Delta^{-1}(n^\lambda_0-1)
   -\nabla\Delta^{-1}\md\int_0^t(n^\lambda u^\lambda)(x,\tau)d\tau\right).
\end{align}
Using \eqref{epm1} and the following equality
$$\md(n^\lambda u^\lambda\otimes u^\lambda)=n^\lambda(u^\lambda\cdot\nabla)u^\lambda
+u^\lambda\md(n^\lambda u^\lambda),$$ we can rewrite \eqref{epm2} as
\begin{align}\label{epm22}
   \pt u^\lambda+(u^\lambda\cdot\nabla)u^\lambda+\nabla h(n^\lambda)
 =\nabla\phi^\lambda+\frac{1}{n^\lambda}\c B^\lambda\times B^\lambda,
\end{align}where the enthalpy $h(n^\lambda) > 0$  is defined by
$$h'(n^\lambda)=\frac{p'(n^\lambda)}{n^\lambda}$$ for $n^\lambda>0$.

Set
$$ W^\lambda=\left(
                      \begin{array}{c}
                        n^\lambda \\
                        u^\lambda\\
                        B^\lambda \\
                      \end{array}
                    \right), W^\lambda_0=\left(
                      \begin{array}{c}
                        n^\lambda_0 \\
                        u^\lambda_0\\
                        B^\lambda_0 \\
                      \end{array}
                    \right),
$$
$$A_i(W^\lambda)=\left(
           \begin{array}{ccccccc}
             u^\lambda_i & n^\lambda e_i^T & 0 \\
             h'(n^\lambda)e_i & u^\lambda_i\textbf{I}_{3\times3} & \frac{(G^\lambda_i)^T}{n^\lambda}  \\
             0 & \frac{G^\lambda_i}{n^\lambda} & u^\lambda_i\textbf{I}_{3\times3}\\
           \end{array}
         \right), $$
         $$F^\lambda=\left(
                      \begin{array}{c}
                        0 \\
                        \nabla\Delta^{-1}(n^\lambda_0-1)-\nabla\Delta^{-1}
                        \md\int_0^t(n^\lambda  u^\lambda)(x,\tau)d\tau \\
                        0 \\
                      \end{array}
                    \right),
         $$
where $(e_1,e_2,e_3)$ is the canonical base of $\mathbb{R}^3$,
$\textbf{I}_{3\times3}$  is a  unit matrix, $y_i$ denotes the $i$th
component of $y\in\mathbb{R}^3$  and
$$G^\lambda_1=\left(
\begin{array}{ccc}
0 & 0 & 0  \\
 B^\lambda_2 & -B^\lambda_1 & 0  \\
 B^\lambda_3 & 0 & -B^\lambda_1 \\
           \end{array}
         \right),G^\lambda_2=\left(
\begin{array}{ccc}
-B^\lambda_2 & B^\lambda_1 & 0  \\
0 & 0 & 0  \\
0 &  B^\lambda_3 & -B^\lambda_2 \\
           \end{array}
         \right),$$$$G^\lambda_3=\left(
\begin{array}{ccc}
-B^\lambda_3 & 0 & B^\lambda_1  \\
 0 & -B^\lambda_3 &  B^\lambda_2  \\
 0 & 0 & 0 \\
           \end{array}
         \right).$$
Thus the problem \eqref{epm1}-\eqref{epmi} for the unknown $W$ can
be rewritten as \begin{align}
   &\pt W^\lambda+\sum\limits_{i=1}^{3}A_i(W^\lambda)\partial_{x_i}W^\lambda
   =\frac{1}{\lambda^2}F^\lambda,\label{ww1}\\
    &W^\lambda|_{t=0}=W^\lambda_0.\label{ww2}
\end{align}

It is not difficult to see that the equations of $W^\lambda$ in
\eqref{ww1}-\eqref{ww2} are symmetrizable hyperbolic, i.e. if we
introduce
$$A_0(n^\lambda)=\left(
           \begin{array}{ccccccc}
             h'(n^\lambda) & 0 & 0 \\
             0 & n^\lambda\textbf{I}_{3\times3} & 0  \\
             0 & 0 &n^\lambda \textbf{I}_{3\times3} \\
           \end{array}
         \right),$$
which is positively definite when $n^\lambda>\delta>0$, then
$\widehat{A}_i(W^\lambda)=A_0(n^\lambda)A_i(W^\lambda)$ are
symmetric for all $1\leq i\leq3$.

Thus, to solve the system \eqref{epm1}-\eqref{epmi}, it suffices to
solve the system \eqref{ww1}-\eqref{ww2}. Since the non-local source
term $\nabla\Delta^{-1}\md\int_0^t(n^\lambda
u^\lambda)(x,\tau)d\tau$ is a sum of products of Riesz transforms of
$\int_0^t(n^\lambda u^\lambda)(x,\tau)d\tau$, we have, by the $L^2$
boundedness of the Riesz transformation (see \cite{s70}),
$$\left\|\nabla\Delta^{-1}\md\int_{0}^{t}(n^\lambda u^\lambda)(x,\tau)d\tau\right\|_{s}
\leq C\left\|\int_{0}^{t}(n^\lambda
u^\lambda)(x,\mu)d\mu\right\|_{s},$$ for some constant $C > 0$
independent of $t$.

Moreover, we recall the other elementary fact which can be easily
proven by using Fourier series.
\begin{lem}
$\nabla\Delta^{-1}$  is a bounded linear operator from $V=\{v\in
L^2(\r)| \textbf{m}(v)=0\}$ into $H^1(\r)$.
\end{lem}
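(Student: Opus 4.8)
The plan is to work with Fourier series on the torus $\r$. Any $v \in V$ has a Fourier expansion $v(x) = \sum_{k \in \mathbb{Z}^3} \hat v(k) e^{2\pi i k\cdot x}$, and the condition $\mathbf{m}(v)=0$ (zero mean) is precisely $\hat v(0)=0$. Then $\Delta^{-1}v$ is the function whose Fourier coefficients are $-\hat v(k)/(4\pi^2|k|^2)$ for $k\neq 0$ and $0$ for $k=0$; this is well-defined exactly because the zero mode has been removed, so we never divide by $|k|^2=0$. Applying $\nabla$ multiplies the $k$-th coefficient by $2\pi i k$, so
\begin{equation}
\widehat{\nabla\Delta^{-1}v}(k) = \frac{-i\,k}{2\pi|k|^2}\,\hat v(k), \qquad k\neq 0, \notag
\end{equation}
and the zero coefficient vanishes.

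Next I would estimate the $H^1$ norm via the Fourier characterization $\|w\|_{H^1}^2 \simeq \sum_{k}(1+|k|^2)|\hat w(k)|^2$. For $w = \nabla\Delta^{-1}v$ we have, for $k\neq 0$,
\begin{equation}
(1+|k|^2)\,\bigl|\widehat{\nabla\Delta^{-1}v}(k)\bigr|^2 = (1+|k|^2)\,\frac{|k|^2}{4\pi^2|k|^4}\,|\hat v(k)|^2 = \frac{1+|k|^2}{4\pi^2|k|^2}\,|\hat v(k)|^2. \notag
\end{equation}
Since $|k|\ge 1$ for every nonzero $k\in\mathbb{Z}^3$, the factor $\tfrac{1+|k|^2}{4\pi^2|k|^2} = \tfrac{1}{4\pi^2}\bigl(1+\tfrac{1}{|k|^2}\bigr) \le \tfrac{1}{2\pi^2}$ is bounded by an absolute constant. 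Summing over $k$ gives $\|\nabla\Delta^{-1}v\|_{H^1}^2 \le C\sum_{k}|\hat v(k)|^2 = C\|v\|_{L^2}^2$, which is the claimed boundedness. Linearity is immediate from the linearity of the Fourier coefficients in $v$.

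The only genuine point to be careful about is that the zero Fourier mode must be excluded for $\Delta^{-1}$ to make sense, which is exactly why the domain is taken to be $V$ rather than all of $L^2(\r)$; there is no real obstacle here, just bookkeeping. (If one wishes to avoid the discrete spectral gap $|k|\ge 1$, one can instead note that $\nabla\Delta^{-1}$ composed with $\nabla$ is a matrix of Riesz-type operators bounded on $L^2$, but the direct Fourier estimate above is cleanest on the torus.)
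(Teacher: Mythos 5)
Your proof is correct and follows exactly the route the paper indicates: the paper states only that the lemma ``can be easily proven by using Fourier series'' and gives no further detail, and your Fourier-coefficient computation, using the spectral gap $|k|\ge 1$ for nonzero lattice modes together with the removal of the zero mode on $V$, is the standard way to fill that in. No gaps.
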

So one gets
$$\|\nabla\Delta^{-1}(n^\lambda_{0}-1)\|_{s+1}\leq C\|(n^\lambda_{0}-1)\|_s$$
for some constant $C > 0$.  Based on the above  crucial facts, using
the standard iteration techniques of local existence theory for
symmetrizable hyperbolic systems (see \cite{majda}), we have
\begin{prop}\label{prop18}Assume that the initial conditions $(n^\lambda_0,u^\lambda_0,B^\lambda_0)\in
H^s(\r)$, $s>\frac52, n_0>\delta>0, \emph{\md} B^\lambda_0=0$ and
$\int_\r(n^\lambda_0-1)dx=0$. Then for any fixed $\lambda>0$, there
exists a positive constant $T$ (may depend on $\lambda$) such that
the periodic problem \eqref{ww1}-\eqref{ww2} has a unique smooth
solution $(n^\lambda, u^\lambda, B^\lambda)\in C([0,T];H^s(\r))$,
well-defined on $\r\times[0,T]$.
 Hence the
nonlinear periodic problem \eqref{epm1}-\eqref{epmi} admits a unique
solution $(n^\lambda, u^\lambda,\nabla\phi^\lambda,B^\lambda)$
satisfying
\begin{equation}(n^\lambda, u^\lambda,\nabla\phi^\lambda, B^\lambda)\in C([0,T];H^s(\r)).
\label{190}\end{equation}
\end{prop}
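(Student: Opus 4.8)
The plan is to run the standard iteration scheme for symmetrizable hyperbolic systems on \eqref{ww1}--\eqref{ww2}, the only non-textbook ingredient being the nonlocal source $\lambda^{-2}F^\lambda$, which we simply carry along in the iteration. Fix $\lambda>0$, set $\delta_0=\delta/2$ and choose $R\sim 1+\|W_0^\lambda\|_s$. Starting from $W^{(0)}\equiv W_0^\lambda$, define $W^{(k+1)}$ as the solution of the linear problem obtained by freezing the coefficients and the source of the symmetrized equation at the previous iterate,
\begin{equation*}
A_0(n^{(k)})\,\pt W^{(k+1)}+\sum_{i=1}^3\widehat A_i(W^{(k)})\,\partial_{x_i}W^{(k+1)}=\frac{1}{\lambda^2}A_0(n^{(k)})F^{(k)},
\end{equation*}
with $W^{(k+1)}|_{t=0}=W_0^\lambda$, where $\widehat A_i=A_0A_i$ is the symmetric matrix from the excerpt and $F^{(k)}$ is $F^\lambda$ with $n^\lambda u^\lambda$ replaced by $n^{(k)}u^{(k)}$ under the time integral. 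While $n^{(k)}\ge\delta_0$ and $\|W^{(k)}\|_s\le R$, the symmetrizer $A_0(n^{(k)})$ is uniformly positive definite and $F^{(k)}\in C([0,T];H^s)$ (the operator $\nabla\Delta^{-1}\md$ is bounded on $H^s(\r)$, since $\md(n^{(k)}u^{(k)})$ has zero mean and, by hypothesis, so does $n_0^\lambda-1$), so classical linear symmetric hyperbolic theory \cite{majda} gives a unique $W^{(k+1)}\in C([0,T];H^s)$.

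The heart of the matter is the uniform-in-$k$ a priori estimate. Applying $\px$, $|\alpha|\le s$, to the linearized equation, pairing with $A_0(n^{(k)})\px W^{(k+1)}$ in $L^2$, and using the symmetry of $\widehat A_i$, the positivity of $A_0$, and the Moser-type product, commutator and composition estimates collected in Lemma~1.1 (with $\pt A_0(n^{(k)})$ controlled through the equation satisfied by $W^{(k)}$), one obtains a differential inequality for $\sum_{|\alpha|\le s}\langle A_0(n^{(k)})\px W^{(k+1)},\px W^{(k+1)}\rangle$ of the form $\ft(\cdots)\le C(R)(\cdots+1)+C\lambda^{-2}\|F^{(k)}\|_s\|W^{(k+1)}\|_s$, together with $\|F^{(k)}(t)\|_s\le C\big(\|n_0^\lambda-1\|_s+\int_0^t\|n^{(k)}u^{(k)}\|_s\,d\tau\big)$ from $L^2$-boundedness of the Riesz transforms \cite{s70}. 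A continuity/bootstrap argument then produces a time $T=T(\lambda,R,\delta,\|W_0^\lambda\|_s)>0$ on which $\sup_{[0,T]}\|W^{(k)}-W_0^\lambda\|_s\le R$ and $n^{(k)}\ge\delta_0$ for every $k$; here $H^s(\r)\hookrightarrow L^\infty(\r)$ for $s>5/2$ keeps $n^{(k)}$ bounded away from zero, so that $h'(n^{(k)})$, $1/n^{(k)}$ and $A_0(n^{(k)})$ stay well defined.

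Convergence is then routine. Subtracting the equations for $W^{(k+1)}$ and $W^{(k)}$ and estimating the difference in the low norm $L^2$, the coefficient differences are bounded by $\|W^{(k)}-W^{(k-1)}\|$ times quantities controlled by $R$, and $\|F^{(k)}-F^{(k-1)}\|\le C\int_0^t\|W^{(k)}-W^{(k-1)}\|\,d\tau$ again by Riesz boundedness, so, shrinking $T$ if necessary, the map $W^{(k)}\mapsto W^{(k+1)}$ is a contraction in $C([0,T];L^2)$. Interpolating against the uniform $H^s$ bound gives $W^{(k)}\to W^\lambda$ in $C([0,T];H^{s'})$ for all $s'<s$, and $W^\lambda\in L^\infty([0,T];H^s)$ solves \eqref{ww1}--\eqref{ww2}; the strong continuity $W^\lambda\in C([0,T];H^s)$ follows from the usual argument (weak-$\ast$ continuity plus continuity of $t\mapsto\|W^\lambda(t)\|_s$ via the energy identity), and uniqueness from a Gr\"onwall estimate on the $L^2$-difference of two solutions. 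Finally one unwinds the change of variables: the $n$- and $B$-components of \eqref{ww1} give \eqref{epm1} and \eqref{epm3}, while $\pt(\md B^\lambda)=\md\c(u^\lambda\times B^\lambda)=0$ propagates $\md B^\lambda_0=0$ to give \eqref{epm4}; defining $\nabla\phi^\lambda$ by \eqref{phi}, the $u$-component of \eqref{ww1} is \eqref{epm22}, which together with \eqref{epm1} recovers \eqref{epm2}, and applying $\Delta$ to \eqref{phi} and using \eqref{epm1} together with $\int_\r(n_0^\lambda-1)=0$ recovers \eqref{epm5}; Lemma~2.1 and Riesz boundedness give $\nabla\phi^\lambda\in C([0,T];H^s)$.

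The only genuine obstacle is thus the source term $\lambda^{-2}F^\lambda$, and it is a mild one: its nonlocality in time is harmless because $F^\lambda(t)$ depends on the solution only on $[0,t]$, so the iteration closes; its nonlocality in space is harmless because $\nabla\Delta^{-1}\md$ is a sum of products of Riesz transforms, bounded on every $H^s(\r)$, so no derivatives are lost; and the large factor $\lambda^{-2}$ merely shrinks the existence time, which is exactly why $T$ is permitted to depend on $\lambda$. Apart from this, the work is bookkeeping — verifying that the a priori and contraction estimates close and that $n^\lambda$ remains positive.
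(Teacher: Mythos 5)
Your proposal is correct and follows essentially the same route as the paper, which itself only sketches this step: symmetrize via $A_0(n^\lambda)$, control the nonlocal source through the $L^2$-boundedness of the Riesz transforms and the mean-zero conditions, and invoke the standard Majda iteration for symmetric hyperbolic systems, with $T$ allowed to depend on $\lambda$ because of the $\lambda^{-2}$ factor. Your write-up merely supplies the details (uniform high-norm bound, low-norm contraction, recovery of \eqref{epm2}, \eqref{epm4}, \eqref{epm5}) that the paper leaves to the reference \cite{majda}.
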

According to Proposition \ref{prop18}, for each fixed $\lambda>0$ in
\eqref{epm1}-\eqref{epmi}, there exists a time interval $[0,T]$ such
that system \eqref{epm1}-\eqref{epmi} has a unique solution
$(n^\lambda, u^\lambda,\nabla\phi^\lambda,B^\lambda)$ satisfying
\eqref{190}. Define
\begin{equation}\label{define1}
  \begin{split} T_\lambda&=\sup \,\Big\{T>0: (n^\lambda, u^\lambda,
   \nabla\phi^\lambda, B^\lambda)\in C([0,T];H^s(\r)),\\
   &  \frac12\leq n^\lambda \leq\frac32, \forall (x,t)\in
   \r\times[0,T]\Big\}.\end{split}
\end{equation}
Namely, $[0, T_\lambda]$ is the maximal time interval of $H^s-$
existence. Note that $T_\lambda$  may tend to zero as $\lambda$ goes
to  $0$. In order to show that
$\liminf\limits_{\lambda\rightarrow0}T_\lambda > 0$, we follow the
convergence-stability principe \cite{y2001} and   seek a formal
approximation of $(n^\lambda, u^\lambda, B^\lambda)$. To this end,
we consider the initial-value problem  of the ideal incompressible
magnetohydrodynamic equation \eqref{mhd1}-\eqref{mhd3} with initial
data
\begin{align}\label{mhdi}
   u^{0}(\cdot,0)=u^{0}_{0}, B^{0}(\cdot,0)
    =B^{0}_{0}.
\end{align}

Let us  recall the local existence of a strong solution to the ideal
incompressible magnetohydrodynamic equations
\eqref{mhd1}-\eqref{mhd3}. The proof can be found in \cite{dl02}.

\begin{prop} \label{local}
(\emph{see \cite{dl02}.}) Let $s >\frac52$ be an integer. Assume
that the initial data $$(u^0(x, t),B^0(x, t))|_{t=0} = (u^0_0(x),
B^0_0(x))$$ satisfy
$$(u^0_0,B^0_0)\in H^{s+1}(\r) \:\:\:\emph{\mbox{and}}\:\:\:\emph{\md} u^0_0 = 0, \emph{\md} B^0_0 = 0.$$
 Then, there exist a $T_\ast\in(0,+\infty)$ and a unique solution
 $(u^0,B^0)\in L^\infty([0, T_\ast),H^{s+1}(\r))$ to the
ideal incompressible magnetohydrodynamic equations
\eqref{mhd1}-\eqref{mhd3} satisfying, for any $0 < T < T_\ast$,
$$\emph{\md} u^0 = 0, \emph{\md} B^0 = 0$$ and
\begin{eqnarray}
 \sup_{0\leq t\leq
 T}\Big(\|(u^0,B^0)\|_{s+1}+\|(\pt{u^0},\pt B^0)\|_{s}+\|
\nabla p^0\|_{s+1} +\|\pt\nabla p^0\|_{s}\Big)\leq C_T \label{ie}
\end{eqnarray} for some positive constant $C_T$.
\end{prop}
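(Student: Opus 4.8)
The plan is to prove Proposition \ref{local} by the classical local existence theory for symmetrizable hyperbolic systems with a divergence constraint (cf. \cite{majda,dl02}), which is cleanest after passing to the Els\"asser variables. First I would set $z^{\pm}=u^0\pm B^0$ and $q=p^0+\tfrac12|B^0|^2$; using $\md u^0=\md B^0=0$ together with $\c B^0\times B^0=(B^0\cdot\nabla)B^0-\tfrac12\nabla|B^0|^2$ and $\c(u^0\times B^0)=(B^0\cdot\nabla)u^0-(u^0\cdot\nabla)B^0$, the system \eqref{mhd1}-\eqref{mhd3} becomes the symmetric pair
$$\pt z^{+}+(z^{-}\cdot\nabla)z^{+}+\nabla q=0,\qquad \pt z^{-}+(z^{+}\cdot\nabla)z^{-}+\nabla q=0,\qquad \md z^{+}=\md z^{-}=0,$$
with data $z^{\pm}_0=u^0_0\pm B^0_0\in H^{s+1}(\r)$, $\md z^{\pm}_0=0$, where $\nabla q$ plays the role of the Lagrange multiplier enforcing the constraint. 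Taking the divergence of the momentum equations and using $\md z^{\pm}=0$ yields the elliptic relation $-\Delta q=\partial_j\partial_k(u^0_ju^0_k-B^0_jB^0_k)$, which recovers $q$ (up to a constant) from $(u^0,B^0)$.

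For existence I would run a Friedrichs mollification: with $J_\epsilon$ a Fourier truncation and $\mathbb{P}$ the Leray projection onto divergence-free fields, solve the ODE $\pt z^{\pm}_\epsilon+\mathbb{P}J_\epsilon\big((J_\epsilon z^{\mp}_\epsilon\cdot\nabla)J_\epsilon z^{\pm}_\epsilon\big)=0$, $z^{\pm}_\epsilon|_{t=0}=J_\epsilon z^{\pm}_0$, in $H^{s+1}(\r)\cap\{\md\cdot=0\}$, which is uniquely solvable for a short time by Picard's theorem. The heart of the matter is a uniform $H^{s+1}$ energy estimate: applying $\px$ for $|\alpha|\le s+1$ and pairing with $\px z^{\pm}_\epsilon$, the top-order transport term vanishes because $\md J_\epsilon z^{\mp}_\epsilon=0$, the pressure term vanishes because $\md\px z^{\pm}_\epsilon=0$, and the commutator is controlled through the Moser-type estimate (part 2 of the Lemma above) by $C(\|\nabla z^{+}_\epsilon\|_\infty+\|\nabla z^{-}_\epsilon\|_\infty)(\|z^{+}_\epsilon\|_{s+1}^2+\|z^{-}_\epsilon\|_{s+1}^2)$; since $s+1>\tfrac d2+1$, $\|\nabla z^{\pm}_\epsilon\|_\infty\le C\|z^{\pm}_\epsilon\|_{s+1}$, so $E_\epsilon:=\|z^{+}_\epsilon\|_{s+1}^2+\|z^{-}_\epsilon\|_{s+1}^2$ obeys $\ft E_\epsilon\le CE_\epsilon^{3/2}$ and stays bounded on $[0,T]$ for every $T<T_\ast$, where $T_\ast>0$ depends only on $\|(u^0_0,B^0_0)\|_{s+1}$, uniformly in $\epsilon$. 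From the equation one also gets $\|\pt z^{\pm}_\epsilon\|_s\le C$ on $[0,T]$ (the product estimate, part 1 of the Lemma above, plus boundedness of $\mathbb{P}$ and $J_\epsilon$). Then I would pass to the limit $\epsilon\to0$ via Aubin--Lions and interpolation (strong convergence in $C([0,T];H^{s'})$ for $s'<s+1$ suffices for the quasilinear terms), check that the constraint $\md z^{\pm}=0$ is preserved, and conclude $(z^{+},z^{-})\in L^\infty([0,T_\ast);H^{s+1})$ by weak lower semicontinuity of the norm; undoing the substitution produces $(u^0,B^0)$ with $\md u^0=\md B^0=0$ solving \eqref{mhd1}-\eqref{mhd3}. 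Uniqueness I would obtain from an $L^2$ energy estimate on the difference of two solutions (again the transport and pressure terms contributing nothing uncontrolled) followed by Gronwall's inequality.

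Finally I would read off \eqref{ie}: the bounds on $\|(\pt u^0,\pt B^0)\|_s$ follow from $\pt z^{\pm}=-(z^{\mp}\cdot\nabla)z^{\pm}-\nabla q$, the product estimate, and the $L^2$-boundedness of the Riesz transform applied to the elliptic relation for $q$; the bounds on $\nabla p^0$ and $\pt\nabla p^0$ follow from Calder\'on--Zygmund regularity applied to $-\Delta q=\partial_j\partial_k(u^0_ju^0_k-B^0_jB^0_k)$ and to its time derivative, together with $p^0=q-\tfrac12|B^0|^2$ and the bounds already obtained, all constants depending only on $s$, $d$ and $\|(u^0_0,B^0_0)\|_{s+1}$. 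I expect the only genuinely delicate point to be closing the $H^{s+1}$ estimate: a priori the pressure gradient costs a derivative and the highest-order magnetic terms $(B^0\cdot\nabla)B^0$ and $(B^0\cdot\nabla)u^0$ look uncontrollable, but in the Els\"asser form both obstructions disappear at once — the pressure because $\md\px z^{\pm}=0$, and the magnetic coupling because the system is symmetric, so the top-order contributions from the two momentum/induction equations cancel when added — after which the argument reduces to the standard quasilinear-hyperbolic toolbox. (If one prefers to work in the original variables, exhibiting this same cancellation of the top-order $(B^0\cdot\nabla)$ terms between \eqref{mhd1} and \eqref{mhd2} is then the main bookkeeping task.)
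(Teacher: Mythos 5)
The paper does not prove Proposition \ref{local} at all: it is quoted as a known result with a citation to D\'{\i}az--Lerena \cite{dl02}, so your write-up is compared against a reference rather than against an in-paper argument. That said, your proposal is a correct and essentially self-contained route to the statement, and it is the standard one: passing to the Els\"asser variables $z^{\pm}=u^0\pm B^0$ turns ideal MHD into a symmetric pair of transport equations with a common pressure gradient, after which the Friedrichs mollification, the cancellation of the top-order transport and pressure terms (using $\md z^{\mp}=0$ and the Leray projection), the Moser commutator estimate giving $\frac{d}{dt}E\le CE^{3/2}$, the compactness passage to the limit, and the $L^2$ uniqueness argument are exactly the quasilinear-hyperbolic toolbox the paper itself invokes (via \cite{majda}) for Proposition 2.1. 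Your identification of the key cancellation --- that in Els\"asser form both the pressure and the top-order magnetic coupling are harmless --- is the right ``main idea'' and is consistent with how \cite{dl02} and the classical literature handle this system.

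One point deserves care: the estimate $\|\nabla p^0\|_{s+1}\le C_T$ in \eqref{ie}. Your elliptic relation gives $-\Delta q=\partial_k z^{-}_j\,\partial_j z^{+}_k\in H^{s}$ (using $\md z^{+}=0$ to drop one derivative off the divergence form), hence $\nabla q\in H^{s+1}$; but recovering $p^0=q-\tfrac12|B^0|^2$ then only yields $\nabla p^0\in H^{s}$, since $|B^0|^2\in H^{s+1}$ and you lose a derivative taking its gradient. So as written your last step delivers $\|\nabla q\|_{s+1}+\|\pt\nabla q\|_s\le C_T$ rather than the same bound for the fluid pressure $p^0$. This is as much a wrinkle in the statement (which is sensitive to whether $p^0$ denotes the fluid or the total pressure, the two differing by $\tfrac12|B^0|^2$ and the Lorentz force being rewritten accordingly) as in your proof, and it is the total-pressure bound that the rest of the paper actually uses through $Q=-p^0$; but you should either state the conclusion for $q$ or note explicitly that $p^0$ is normalized as the total pressure, since the naive subtraction does not give the claimed $H^{s+1}$ bound on $\nabla p^0$.
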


Now the main result of this paper reads as follows.

\begin{thm}\label{thm} Let $s > \frac32+ 1$ be an integer.  Suppose $\emph{\md} u^0_0=0,$ $\emph{\md} B^0_0=0$,
and incompressible magnetohydrodynamic equations
\eqref{mhd1}-\eqref{mhd3} with the initial data $(u^0_0,B^0_0)$ has
a solution $(u^0,B^0)\in L^\infty([0, T_\ast),H^{s+1}(\r))$. Then,
for $\lambda$ sufficiently small, there exists a $\lambda-$independent  positive number $T_{\ast\ast}<T\ast$, such that the  model
\eqref{epm1}-\eqref{epm5} with periodic initial data
$(n^\lambda_0,u^\lambda_0,B^\lambda_0)$ satisfying
\begin{align}\label{thmini}
   n^\lambda_0=1,  u^\lambda_0=u^0_0, B^\lambda_0=B^0_0,
\end{align}
has a unique solution
$(n^\lambda,u^\lambda,B^\lambda,\nabla\phi^\lambda)\in
C([0,T_{\ast\ast}];H^s(\r))$. Moreover, there exists a $\lambda-$independent  constant $M>0$ such that
\begin{align}\label{thm191}
    \sup_{0\leq t\leq
 T}(\|n^\lambda-1\|_s+\|u^\lambda-u^0\|_s+\|B^\lambda-B^0\|_s)\leq
   M\lambda.
\end{align}
\end{thm}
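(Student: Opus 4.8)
The plan is to set up an error system for the differences between the exact solution of \eqref{epm1}-\eqref{epm5} and the MHD profile, and then close a Gronwall-type energy estimate uniformly in $\lambda$ on a fixed time interval, invoking the convergence-stability principle of Yong \cite{y2001} to upgrade this into existence on $[0,T_{\ast\ast}]$. Concretely, I would first construct a formal approximate solution $(\bar n^\lambda,\bar u^\lambda,\bar B^\lambda,\nabla\bar\phi^\lambda)$ built from the MHD limit: set $\bar u^\lambda=u^0$, $\bar B^\lambda=B^0$, and $\bar n^\lambda = 1 + \lambda^2 n^{(1)}$ where $n^{(1)}$ is chosen (via the Poisson relation $\lambda^2\Delta\phi^\lambda = n^\lambda-1$ together with the momentum equation) so that the residual when this triple is plugged into \eqref{epm1}-\eqref{epm5} is $O(\lambda^2)$ in the relevant $H^s$ norm; this uses Proposition~\ref{local} to guarantee enough regularity (that is why $H^{s+1}$ is assumed on the MHD side). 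In particular one reconciles $\nabla\bar\phi^\lambda$ with $\nabla p^0$ so that, to leading order, $\nabla\phi^\lambda$ replaces the pressure gradient in \eqref{mhd1}.

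Next I would introduce the error unknowns $N^\lambda = (n^\lambda - \bar n^\lambda)/\lambda$, $U^\lambda = u^\lambda - \bar u^\lambda$, $B^\lambda - \bar B^\lambda$ (the scaling by $\lambda$ on the density being dictated by the singular factor $\lambda^{-2}$ in front of $F^\lambda$ in \eqref{ww1}), and derive the system they satisfy. Using the symmetrizer $A_0(n^\lambda)$ from Section~2, this error system is again symmetrizable hyperbolic with a source that splits into: (i) the $O(\lambda)$ consistency residual, (ii) terms linear in the error with bounded coefficients, and (iii) quadratic error terms. The energy method then proceeds by applying $\partial_x^\alpha$ for $|\alpha|\le s$, pairing with $A_0\partial_x^\alpha(\text{error})$, and integrating over $\r$; the crucial point is that the singular term $\lambda^{-2}F^\lambda$, after subtracting the approximate solution, produces a contribution of the form $\lambda^{-1}\,(\text{Riesz-type operator applied to }N^\lambda,U^\lambda)$, which is \emph{antisymmetric} to leading order (it comes from the $\nabla\Delta^{-1}\md$ structure and the continuity equation), so its contribution to $\frac{d}{dt}\|(N^\lambda,U^\lambda,\dots)\|_s^2$ either cancels or is controlled after one integration by parts — exactly the mechanism that makes the quasi-neutral limit work. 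The $L^2$-boundedness of Riesz transforms and Lemma~2.2 (boundedness of $\nabla\Delta^{-1}$ on mean-zero fields) are used here, along with the Moser-type and commutator estimates in Lemma~1.1 to handle the variable-coefficient and nonlinear terms.

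Putting these together gives a differential inequality of the schematic form $\frac{d}{dt}\mathcal E_s(t) \le C_T\big(\mathcal E_s(t) + \mathcal E_s(t)^2 + \lambda^2\big)$ where $\mathcal E_s(t) = \|(N^\lambda,U^\lambda,B^\lambda-\bar B^\lambda)\|_s^2$ and $\mathcal E_s(0)=0$ because of the well-prepared initial data \eqref{thmini}. A standard continuation argument then shows that on a $\lambda$-independent interval $[0,T_{\ast\ast}]$ one has $\mathcal E_s(t)\le C_T\lambda^2$, which in turn keeps $\tfrac12\le n^\lambda\le\tfrac32$ and hence, via the definition \eqref{define1} of $T_\lambda$ and Proposition~\ref{prop18}, forces $T_\lambda \ge T_{\ast\ast}$; this is the convergence-stability principle. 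Undoing the scaling yields $\|n^\lambda-1\|_s \le \|n^\lambda-\bar n^\lambda\|_s + \|\bar n^\lambda - 1\|_s \lesssim \lambda$ and likewise for the velocity and magnetic field, giving \eqref{thm191} with the sharp rate $O(\lambda)$.

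I expect the main obstacle to be the treatment of the singular source term: one must verify that after the substitution of the approximate solution and after applying $\partial_x^\alpha$, the $O(\lambda^{-1})$ piece genuinely integrates to something bounded (using the symmetry of the Riesz/$\nabla\Delta^{-1}$ structure against the continuity equation for $N^\lambda$), rather than a term that blows up as $\lambda\to0$. A secondary technical nuisance is the time-integral non-locality $\nabla\Delta^{-1}\md\int_0^t(n^\lambda u^\lambda)\,d\tau$ in $F^\lambda$: its time derivative is local and clean, but controlling the undifferentiated term requires carrying an auxiliary estimate on $\int_0^t(n^\lambda u^\lambda)\,d\tau$ in $H^s$, which is where the $L^2$-boundedness of Riesz transforms stated in Section~2 is invoked. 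Everything else — the Moser estimates, commutator bounds, and the Gronwall/continuation bookkeeping — is routine given Lemma~1.1 and Propositions~\ref{prop18} and~\ref{local}.
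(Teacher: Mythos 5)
Your overall architecture (MHD profile as approximate solution, error system, symmetrizer $A_0$, Moser/commutator estimates, Gronwall plus the convergence--stability continuation) is the same as the paper's, and the treatment of the non-local term and of the initial data matches. However, there is a genuine gap at the one point that actually makes the quasi-neutral limit work: your choice of unknowns and energy. With $N^\lambda=(n^\lambda-\bar n^\lambda)/\lambda$ and the energy $\mathcal E_s=\|(N^\lambda,U^\lambda,B^\lambda-\bar B^\lambda)\|_s^2$, the error system is no longer symmetrizable uniformly in $\lambda$: the continuity equation for $N^\lambda$ carries the term $\lambda^{-1}\,\mbox{div}(n^\lambda U^\lambda)$, while the matching pressure coupling in the $U^\lambda$-equation is only $O(\lambda)\nabla N^\lambda$; the weights are mismatched by $\lambda^{2}$, so $\frac{d}{dt}\|N^\lambda_\alpha\|^2$ retains an uncancelled term of size $\lambda^{-1}\|N^\lambda\|_s\|U^\lambda\|_{s}$. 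Likewise, the electrostatic forcing $\lambda^{-1}\nabla\Delta^{-1}N^\lambda$ paired with $U^\lambda_\alpha$ is \emph{not} antisymmetric against the divergence term in your energy (the operators $\Delta^{-1}$ and the identity do not match), so it does not ``cancel after one integration by parts''; what it actually produces, after using the continuity and Poisson equations, is the total time derivative $-\frac12\frac{d}{dt}\|\nabla\Delta^{-1}N^\lambda_\alpha\|^2$. Hence the differential inequality $\frac{d}{dt}\mathcal E_s\le C(\mathcal E_s+\mathcal E_s^2+\lambda^2)$ is not established for your $\mathcal E_s$, and indeed it would yield $\|n^\lambda-1\|_s=O(\lambda^2)$, stronger than what these methods give.

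The paper's resolution, which your proposal is missing, is twofold: keep the density error $N=n^\lambda-1$ \emph{unscaled} (so that the symmetrizer $A_0(n^\lambda)$ handles the hyperbolic coupling in the standard way), and \emph{augment} the energy with the scaled electrostatic energy $\|\nabla\Phi_\alpha\|^2$, $\Phi=\lambda(\phi^\lambda-\phi_\lambda)$, into which the singular term $\frac{2}{\lambda}\int n^\lambda U_\alpha\cdot\nabla\Phi_\alpha$ is absorbed as $-\frac{d}{dt}\int|\nabla\Phi_\alpha|^2$ via $\mbox{div}\,U=-\partial_t N-\mbox{div}(u^\lambda N)$ and $\lambda\Delta\Phi=N+\lambda^2\Delta Q$. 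The sharp rate $O(\lambda)$ then comes not from an $O(\lambda^2)$ bound on a scaled energy but from a nonlinear Gronwall argument applied to $\mathcal D=(\|E\|_s+\|\nabla\Phi\|_s)/\lambda$, which yields the uniform bound $\mathcal D\le M$ on a $\lambda$-independent interval. Your remaining ingredients (Riesz-transform boundedness, Lemma 2.2, the well-prepared data, the continuation argument) are all used as you describe, but without the corrected energy functional the central estimate does not close.
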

\begin{rem} The initial data
\begin{align*}
   n^\lambda_0=1,  u^\lambda_0=u^0_0, B^\lambda_0=B^0_0
\end{align*} can be relaxed as
\begin{align*}
   n^\lambda_0=1+O(\lambda^2),  u^\lambda_0=u^0_0+O(\lambda),
   B^\lambda_0=B^0_0+O(\lambda),
\end{align*} without changing our arguments.
\end{rem}

\begin{rem}
   Theorem \ref{thm} describes the quasi-neutral limit $\lambda\rightarrow0$ of the
    system \eqref{epm1}-\eqref{epm5} with well-prepared initial data, avoiding the
presence of the initial time layer. We will discuss the case of
general initial data (ill-prepared initial data) allowing the
presence of the fast singular oscillation in the future.
\end{rem}

\section{Proof of Theorem \ref{thm}}

 Thanks to the convergence-stability principle developed in \cite{y99,y2001},
 it suffices to prove the
error estimate in \eqref{thm191} for
$t\in[0,\min\{T_\lambda,T_{\ast\ast}\}]$ with $T_{\ast\ast}<T_{\ast}$ independent of $\lambda$ and to be determined. Thus we directly make the error
estimate \eqref{thm191} in the time interval
$[0,\min\{T_\lambda,T_{\ast\ast}\}]$.

Now we rewrite \eqref{ww1} as the following form
\begin{align}
    &\pt W^\lambda+\sum\limits_{i=1}^{3}A_i(W^\lambda)\partial_{x_i}W^\lambda
    =\left(
                   \begin{array}{c}
                     0 \\
                     \nabla\phi^\lambda  \\
                     0 \\
                   \end{array}
                 \right),\label{mep1}\\
    &\lambda^2\Delta\phi^{\lambda}=n^\lambda-1,\md B^\lambda=0.\label{mep2}
\end{align}

We note that with $(u^0,p^0,B^0)$ constructed in Proposition
\ref{local},
$$(n_\lambda,u_\lambda,\phi_\lambda,B_\lambda)=(1,u^0,-p^0,B^0)$$
 satisfies
\begin{align}
             &\pt n_{\lambda}+\md (n_{\lambda}u_{\lambda})=0,\label{mhd2111}\\
             &\displaystyle  \pt u^{\lambda}+(u_{\lambda}\cdot
             \nabla)u_{\lambda}
             +\nabla h'(n_{\lambda})
 =\nabla\phi_{\lambda}+\frac{1}{n_\lambda}\c B_{\lambda}\times B_{\lambda}, \label{mhd22}\\
&\pt B_{\lambda}-\c(u_{\lambda}\times B_{\lambda})=0,\label{mhd23}\\
&\md B_{\lambda}=0,\label{mhd24}\\
&\lambda^2\Delta\phi_{\lambda}=n_{\lambda}-1-\lambda^2\Delta
Q,\label{mhd25}
\end{align}
with $Q=-p^0.$ From Proposition \ref{local}, we have
\begin{align*}
    \sup_{t\in[0,T_\ast]}(\|\nabla Q(\cdot,t)\|_{s+1}+\|\pt \nabla
   Q(\cdot,t)\|_s)<+\infty.
\end{align*}

So, we can rewrite \eqref{mhd2111}-\eqref{mhd25} as
\begin{align}
    &\pt W_\lambda+\sum\limits_{i=1}^{3}A_i(W_\lambda)\partial_{x_i}W_\lambda
    =\left(
                   \begin{array}{c}
                     0 \\
                     \nabla\phi_\lambda  \\
                     0 \\
                   \end{array}
                 \right),\label{mhd21}\\
    &\lambda^2\Delta\phi_{\lambda}=n_\lambda-1-\lambda^2\Delta Q,\md B^\lambda=0.\label{mhd22}
\end{align}

 Set $$E=W^\lambda-W_\lambda=\left(
                                          \begin{array}{c}
                                            N \\
                                            U \\
                                            H \\
                                          \end{array}
                                        \right)=\left(
                                                  \begin{array}{c}
                                                    n^\lambda-n_\lambda \\
                                                    u^\lambda-u_\lambda \\
                                                    B^\lambda-B_\lambda \\
                                                  \end{array}
                                                \right),  \Phi=\lambda(\phi^\lambda-\phi_\lambda).
$$
We deduce from \eqref{mep1}-\eqref{mep2} and
\eqref{mhd21}-\eqref{mhd22} that
\begin{align}
    &\pt
    E+\sum\limits_{i=1}^{3}A_i(W^\lambda)\partial_{x_i}E
    =\frac{1}{\lambda}G+\sum\limits_{i=1}^{3}(A_i(W^\lambda)-A_i(W_\lambda))
    \partial_{x_i}W_\lambda,\label{error1}\\
    &\lambda\Delta\Phi=N+\lambda^2\Delta Q,\label{error2}
\end{align}
where $$G=\left(
                                  \begin{array}{c}
                                   0 \\
                                    \nabla\Phi \\
                                    0 \\
                                  \end{array}
                                \right).
$$

We differentiate \eqref{error1} with $\partial^{\alpha}_{x}$  for a
multi-index $\alpha$ satisfying $|\alpha|\leq s$ with $s>\frac52$ to
get
\begin{align}\label{erra}
    &\pt
    E_\alpha+\sum\limits_{i=1}^{3}A_i(W^\lambda)\partial_{x_i}E_\alpha=\frac{1}{\lambda}G_\alpha
    +R^1_\alpha+R^2_\alpha
\end{align}
with $\partial^{\alpha}_{x}f=f_\alpha,$ where
$$R^1_\alpha=\sum\limits_{i=1}^{3}[(A_i(W^\lambda)-A_i(W_\lambda))\partial_{x_i}W_\lambda]_\alpha,$$
$$R^2_\alpha=\sum\limits_{i=1}^{3}[A_i(W^\lambda)\partial_{x_i}E_\alpha-(A_i(W^\lambda)\partial_{x_i}E)_\alpha].$$

For the sake of clarity, we divide the following arguments into
lemmas.
\begin{lem}\label{lem5}
    Under the conditions of Theorem \emph{\ref{thm}}, we have
\begin{align}\label{lem51}
   &\ft\int_{\r}(E^T_\alpha
   A_0(n^\lambda)E_\alpha+|\nabla\Phi_\alpha|^2)
dx\notag\\&\qquad\leq C\|\nabla\Phi\|_s^2
    +\frac{C}{\lambda}\|U\|_s\|N\|_s\|\nabla\Phi_\alpha\|+
\frac{2}{\lambda}\|E\|_s\|U_\alpha\|\| \nabla\Phi_\alpha\|\notag\\
&\quad\quad\quad+C\|E_\alpha\|\|R^1_\alpha\|+C\|E_\alpha\|\|R^2_\alpha\|+C(1+\|E\|_s)\|E_\alpha\|^2+\lambda^2,
\end{align}
where $C$ is a generic constant depending only on the range
$(\frac12,\frac32)$of $n^\lambda$.
\end{lem}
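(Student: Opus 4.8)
The plan is to perform a standard $L^2$ energy estimate on the symmetrized form \eqref{erra} of the differentiated error equation. First I would multiply \eqref{erra} on the left by $2E_\alpha^T A_0(n^\lambda)$ and integrate over $\r$. The time-derivative term produces $\ft\int_\r E_\alpha^T A_0(n^\lambda) E_\alpha\,dx$ up to the extra contribution $\int_\r E_\alpha^T (\pt A_0(n^\lambda)) E_\alpha\,dx$; since $\pt A_0$ depends on $\pt n^\lambda$, which by \eqref{epm1} equals $-\md(n^\lambda u^\lambda)$ and is bounded in $L^\infty$ via the a priori bounds $\tfrac12\le n^\lambda\le\tfrac32$ together with the control of $\|W^\lambda\|_s$ (or, in the convergence-stability framework, $\|W_\lambda\|_s$ plus $\|E\|_s$), this term is absorbed into $C(1+\|E\|_s)\|E_\alpha\|^2$. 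The convection term $2\int_\r E_\alpha^T A_0(n^\lambda)\sum_i A_i(W^\lambda)\partial_{x_i}E_\alpha\,dx$ is handled by integration by parts, using that $\widehat A_i=A_0A_i$ is symmetric, so it equals $-\int_\r E_\alpha^T\big(\sum_i\partial_{x_i}\widehat A_i(W^\lambda)\big)E_\alpha\,dx$, again a term of order $\|E_\alpha\|^2$ with a constant controlled by $\|\partial_x W^\lambda\|_\infty\le C(1+\|E\|_s)$.

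Next I would treat the right-hand side terms. The commutator-type remainders contribute $2\int_\r E_\alpha^T A_0(n^\lambda)(R^1_\alpha+R^2_\alpha)\,dx$, which by Cauchy--Schwarz and the boundedness of $A_0(n^\lambda)$ on the range $(\tfrac12,\tfrac32)$ are bounded by $C\|E_\alpha\|\|R^1_\alpha\| + C\|E_\alpha\|\|R^2_\alpha\|$, matching the stated inequality. The genuinely delicate term is the singular source $\frac{2}{\lambda}\int_\r E_\alpha^T A_0(n^\lambda) G_\alpha\,dx = \frac{2}{\lambda}\int_\r n^\lambda U_\alpha\cdot\nabla\Phi_\alpha\,dx$ (only the velocity block of $G$ is nonzero). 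I would write $n^\lambda U_\alpha = U_\alpha + N U_\alpha$, so this splits into $\frac{2}{\lambda}\int_\r U_\alpha\cdot\nabla\Phi_\alpha\,dx$ plus $\frac{2}{\lambda}\int_\r N U_\alpha\cdot\nabla\Phi_\alpha\,dx$; the second piece is bounded by $\frac{C}{\lambda}\|N\|_\infty\|U_\alpha\|\|\nabla\Phi_\alpha\| \le \frac{C}{\lambda}\|N\|_s\|U\|_s\|\nabla\Phi_\alpha\|$ (using $s>\tfrac32$ for the embedding $H^s\hookrightarrow L^\infty$), and more crudely by $\frac{2}{\lambda}\|E\|_s\|U_\alpha\|\|\nabla\Phi_\alpha\|$, which is exactly how the two $1/\lambda$ terms in \eqref{lem51} arise.

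For the leading piece $\frac{2}{\lambda}\int_\r U_\alpha\cdot\nabla\Phi_\alpha\,dx$, integrate by parts to get $-\frac{2}{\lambda}\int_\r (\md U_\alpha)\Phi_\alpha\,dx$ and use the elliptic relation \eqref{error2}: differentiating gives $\lambda\Delta\Phi_\alpha = N_\alpha + \lambda^2\Delta Q_\alpha$, hence $\md U_\alpha$ must be related to $\pt N_\alpha$ through the continuity equation for $N$. Concretely, subtracting \eqref{mhd2111} from \eqref{epm1} and differentiating yields $\pt N_\alpha + \md(\text{flux})_\alpha = 0$ where the flux is $n^\lambda u^\lambda - u^0$; extracting the term $\md U_\alpha$ and pairing with $\Phi_\alpha = \lambda\Delta^{-1}(N_\alpha + \lambda^2\Delta Q_\alpha)$ produces, after another integration by parts in time hidden inside $\ft$, the term $\ft\int_\r|\nabla\Phi_\alpha|^2\,dx$ on the left (coming from $\frac{1}{\lambda}\langle \pt N_\alpha, \Phi_\alpha\rangle$ combined with $\Phi_\alpha\sim\lambda\Delta^{-1}N_\alpha$) together with lower-order terms giving $C\|\nabla\Phi\|_s^2$ and, via the $\lambda^2\Delta Q$ correction and the bound on $\|\nabla Q\|_{s+1}, \|\pt\nabla Q\|_s$ from Proposition \ref{local}, the residual $+\lambda^2$. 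The main obstacle is precisely this bookkeeping: correctly identifying which part of $\frac{1}{\lambda}\int U_\alpha\cdot\nabla\Phi_\alpha$ becomes the exact time-derivative $\ft\int|\nabla\Phi_\alpha|^2$ (so that it can be moved to the left-hand side and no uncontrolled $1/\lambda^2$ survives), and verifying that every leftover piece is either $O(\|\nabla\Phi\|_s^2)$, $O(\lambda^2)$, or absorbable into the remaining stated terms — in particular making sure the $\lambda^2\Delta Q$ term in \eqref{error2} contributes only at order $\lambda^2$ after using Proposition \ref{local}. Collecting all contributions gives \eqref{lem51}.
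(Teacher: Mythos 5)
Your proposal is correct and follows essentially the same route as the paper: the weighted $L^2$ energy estimate with symmetrizer $A_0(n^\lambda)$, the split $n^\lambda=1+N$ in the singular source, integration by parts combined with the continuity equation for $N$ and the Poisson relation \eqref{error2} to convert $\frac{1}{\lambda}\langle \pt N_\alpha,\Phi_\alpha\rangle$ into the exact derivative $-\ft\int_{\r}|\nabla\Phi_\alpha|^2dx$ plus an $O(\lambda^2)$ remainder from $Q$, and Cauchy--Schwarz for the $R^1_\alpha$, $R^2_\alpha$ and $\md A$ terms. Only two small points deserve more care: the relation is $\Phi=\frac{1}{\lambda}\Delta^{-1}(N+\lambda^2\Delta Q)$, not $\lambda\Delta^{-1}(\cdots)$, and the flux piece $-\frac{2}{\lambda}\int_{\r}(u_\lambda N)_\alpha\cdot\nabla\Phi_\alpha dx$ that you fold into ``lower-order terms'' in fact needs the substitution $N=\lambda\Delta\Phi-\lambda^2\Delta Q$ together with a commutator estimate, $\md u_\lambda=0$, and the identity $\Delta\Phi_\alpha\cdot\nabla\Phi_\alpha=\md(\nabla\Phi_\alpha\otimes\nabla\Phi_\alpha)-\frac12\nabla|\nabla\Phi_\alpha|^2$ to land in $C\|\nabla\Phi\|_s^2+\lambda^2$ without losing a derivative.
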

\begin{proof}
  Taking the $L^2$ inner product of \eqref{erra}
with $D_0(n^\lambda)E_\alpha $, one gets, by integration by parts,
that
\begin{align}\label{211}
\ft\int_{\r}E^T_\alpha A_0(n^\lambda)E_\alpha dx&=
\frac{2}{\lambda}\int_{\r}E^T_\alpha A_0(n^\lambda)G_\alpha
dx+2\int_{\r}E^T_\alpha A_0(n^\lambda)R^1_\alpha
dx\notag\\
&\quad+2\int_{\r}E^T_\alpha A_0(n^\lambda)R^2_\alpha
dx\notag\\
&\quad+\int_{\r}E^T_\alpha \md A(W^\lambda)E_\alpha
dx\notag\\
&=\mathcal {I}^1_\alpha+\mathcal {I}^2_\alpha+\mathcal
{I}^3_\alpha+\mathcal {I}^4_\alpha
\end{align}
with $\md A(W^\lambda)=\pt
A_0(n^\lambda)+\sum\limits_{i=1}^{3}\partial_{x_{i}}(A_0(n^\lambda)A_i(W^\lambda)).$
Recalling that $$A_0(n^\lambda)=\left(
           \begin{array}{ccccccc}
             h'(n^\lambda) & 0 & 0 \\
             0 & n^\lambda\textbf{I}_{3\times3} & 0  \\
             0 & 0 & n^\lambda\textbf{I}_{3\times3} \\
           \end{array}
         \right)$$
and $n^\lambda=n_\lambda +N=1+N$, it is obvious that
\begin{align*}
    \mathcal {I}^1_\alpha&=\frac{2}{\lambda}\int_{\r}n^\lambda E^T_\alpha G_\alpha
dx\notag\\
&= \frac{2}{\lambda}\int_{\r}n^\lambda U_\alpha\cdot
\nabla\Phi_\alpha
dx\notag\\
&= \frac{2}{\lambda}\int_{\r}(1+N)U_\alpha\cdot \nabla\Phi_\alpha
dx\notag\\
&=-\frac{2}{\lambda}\int_{\r}\md U_\alpha\cdot \Phi_\alpha
dx+\frac{2}{\lambda}\int_{\r}NU_\alpha\cdot \nabla\Phi_\alpha
dx\notag\\
&\leq -\frac{2}{\lambda}\int_{\r}\md U_\alpha\cdot \Phi_\alpha
dx+\frac{2}{\lambda}\|N\|_\infty\int_{\r}|U_\alpha||
\nabla\Phi_\alpha|
dx\notag\\
&\leq-\frac{2}{\lambda}\int_{\r}\md U_\alpha\cdot \Phi_\alpha dx+
\frac{2}{\lambda}\|N\|_s\|U_\alpha\|\| \nabla\Phi_\alpha\|.
\end{align*}
Recalling $n_\lambda=1$ and $N=n^\lambda-n_\lambda$, from
\eqref{epm1} and \eqref{mhd21}, we have
\begin{align*}
    \md U=-\pt N-\md(u^\lambda N).
\end{align*}
Then, from \eqref{error2} we have
\begin{align*}
    -\frac{2}{\lambda}\int_{\r}\md U_\alpha\cdot \Phi_\alpha dx
    &=\frac{2}{\lambda}\int_{\r} \pt N_\alpha\cdot \Phi_\alpha dx
    -\frac{2}{\lambda}\int_{\r} (u^\lambda N)_\alpha\cdot \nabla\Phi_\alpha
    dx\\
    &= \frac{2}{\lambda}\int_{\r} (\lambda\pt \Delta\Phi_\alpha-\lambda^2\pt\Delta Q_\alpha)\cdot \Phi_\alpha dx
    -\frac{2}{\lambda}\int_{\r} (u^\lambda N)_\alpha\cdot
    \nabla\Phi_\alpha dx\\
    &=-\ft\int_{\r}|\nabla\Phi_\alpha|^2dx
    +2\lambda\int_{\r} \pt\nabla Q_\alpha\cdot \nabla\Phi_\alpha dx
    \\&\quad
    -\frac{2}{\lambda}\int_{\r} (U N)_\alpha\cdot
    \nabla\Phi_\alpha dx -\frac{2}{\lambda}\int_{\r} (u_\lambda N)_\alpha\cdot
    \nabla\Phi_\alpha dx\\
    &\leq-\ft\int_{\r} |\nabla\Phi_\alpha|^2dx
    +2\lambda\|\pt\nabla Q_\alpha\|\|\nabla\Phi_\alpha\|
    \\&\quad
    +\frac{C}{\lambda}\|U\|_s\|N\|_s\|\nabla\Phi_\alpha\|
    -\frac{2}{\lambda}\int_{\r} (u_\lambda N)_\alpha\cdot
    \nabla\Phi_\alpha dx\\
    &\leq -\ft\int_{\r} |\nabla\Phi_\alpha|^2dx
    +C\|\nabla\Phi_\alpha\|^2+\lambda^2
    +\frac{C}{\lambda}\|U\|_s\|N\|_s\|\nabla\Phi_\alpha\|
    \\&\quad -\frac{2}{\lambda}\int_{\r} (u_\lambda N)_\alpha\cdot
    \nabla\Phi_\alpha dx.
\end{align*}
Using \eqref{error2} and $\md u_\lambda=0$, we have, by part by
integrate, that
\begin{align*}
    -\frac{2}{\lambda}\int_{\r} (u_\lambda N)_\alpha\cdot
    \nabla\Phi_\alpha dx&=-2\int_{\r} (u_\lambda \Delta\Phi)_\alpha\cdot
    \nabla\Phi_\alpha dx-2\lambda\int_{\r} (u_\lambda \Delta Q)_\alpha\cdot
    \nabla\Phi_\alpha dx\\
    &\leq-2\int_{\r} (u_\lambda \Delta\Phi)_\alpha\cdot
    \nabla\Phi_\alpha dx+C\|\nabla\Phi_\alpha\|^2+\lambda^2\\
    &=-2\int_{\r} u_\lambda \Delta\Phi_\alpha\cdot
    \nabla\Phi_\alpha dx+C\|\nabla\Phi_\alpha\|^2+\lambda^2\\
    &\quad-2\int_{\r}[ (u_\lambda \Delta\Phi)_\alpha-u_\lambda \Delta\Phi_\alpha]\cdot
    \nabla\Phi_\alpha dx\\
    &=2\int_{\r}\nabla u_\lambda:(\nabla\Phi_\alpha\otimes \nabla\Phi_\alpha) dx
    +C\|\nabla\Phi_\alpha\|^2+\lambda^2\\
    &\quad-2\int_{\r}[ (u_\lambda \Delta\Phi)_\alpha-u_\lambda \Delta\Phi_\alpha]\cdot
    \nabla\Phi_\alpha dx\\
    &\leq C\|\nabla\Phi\|_s^2+\lambda^2.
\end{align*}
  where we have used the formulation
  $$\Delta\Phi_\alpha\cdot
    \nabla\Phi_\alpha=\md(\nabla\Phi_\alpha\otimes \nabla\Phi_\alpha)-\frac12\nabla|\nabla\Phi_\alpha|^2.$$

 Then we can show that
\begin{align}\label{i1a}
    \mathcal {I}^1_\alpha &\leq -\ft\int_{\r} |\nabla\Phi_\alpha|^2dx
    +C\|\nabla\Phi\|_s^2
    +\frac{C}{\lambda}\|U\|_s\|N\|_s\|\nabla\Phi_\alpha\|\notag\\&\quad+
\frac{2}{\lambda}\|E\|_s\|U_\alpha\|\|
\nabla\Phi_\alpha\|+\lambda^2.
\end{align}

For $\mathcal {I}^2_\alpha$ and $\mathcal {I}^3_\alpha$,  they  are
simply estimated as
\begin{align}
   \mathcal {I}^2_\alpha&=C\int_{\r}|E_\alpha||R^1_\alpha|
dx\leq C\|E_\alpha\|\|R^1_\alpha\|,\label{i2a}\\
\mathcal {I}^3_\alpha&=C\int_{\r}|E_\alpha||R^1_\alpha| dx\leq
C\|E_\alpha\|\|R^2_\alpha\|.\label{i3a}
\end{align}
Moreover, we have
$$|\md A(W^\lambda)|\leq C(1+\|E\|_s).$$
Then $\mathcal {I}^4_\alpha$ can be estimated as
\begin{align}\label{i4a}
    \mathcal {I}^4_\alpha\leq \|\md A(W^\lambda)\|_\infty\int_{\r}E^T_\alpha E_\alpha
dx\leq C(1+\|E\|_s)\|E_\alpha\|^2.
\end{align}
Now, substituting the inequalities \eqref{i1a}-\eqref{i4a} into
\eqref{211} gives \eqref{lem51}.
\end{proof}

Set $$\mathcal {D}=\mathcal
{D}(t)=\frac{\|E\|_s+\|\nabla\Phi\|_s}{\lambda}.$$ Then, for the inequality in Lemma
\ref{lem5}, we have the following claim.
 \begin{lem}\label{lemma6} For any $ \lambda\in(0,1)$, we have
\begin{align}\label{lem61}
   &\ft\int_{\r}(E^T_\alpha
   A_0(n^\lambda)E_\alpha+|\nabla\Phi_\alpha|^2)
dx\leq C(1+\mathcal
{D}^s)(\|E\|^2_{s}+\|\nabla\Phi\|^2_s)+\lambda^2.
\end{align}
\end{lem}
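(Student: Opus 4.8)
The plan is to start from the estimate \eqref{lem51} of Lemma~\ref{lem5}: since its left-hand side is exactly the quantity whose time derivative appears in \eqref{lem61}, it suffices to dominate every one of the seven terms on the right of \eqref{lem51} by $C(1+\mathcal D^s)(\|E\|_s^2+\|\nabla\Phi\|_s^2)$ or by $\lambda^2$, and then sum. Throughout I will use, for $\lambda\in(0,1)$ and the integer $s>\frac32+1$ (so $s\ge3$), the elementary facts
\begin{align*}
  \|N\|_s,\ \|U\|_s,\ \|H\|_s,\ \|U_\alpha\|\ \le\ \|E\|_s\ \le\ \lambda\mathcal D\ \le\ \mathcal D\ \le\ 1+\mathcal D^s,\qquad \|\nabla\Phi_\alpha\|\le\|\nabla\Phi\|_s,
\end{align*}
the chain coming from the definition of $\mathcal D$, from $\lambda<1$, and from $t\le 1+t^s$ for $t\ge0$; together with the Sobolev embedding $H^{s-1}(\r)\hookrightarrow L^\infty(\r)$ and the fact that $\|W_\lambda\|_{s+1}=\|(1,u^0,B^0)\|_{s+1}\le C_T$ by Proposition~\ref{local}, whence also $\|W^\lambda\|_s\le\|W_\lambda\|_s+\|E\|_s\le C_T+\|E\|_s\le C_T(1+\mathcal D)$ and $\|\partial_{x_i}W_\lambda\|_s\le C_T$.

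The four ``soft'' contributions are disposed of immediately. The term $C\|\nabla\Phi\|_s^2$ is already of the required form. For the next two, $\|U\|_s\|N\|_s\le\|E\|_s^2$ and $\|E\|_s\|U_\alpha\|\le\|E\|_s^2$, so that, using $\|E\|_s/\lambda\le\mathcal D$ and Young's inequality,
\begin{align*}
  \frac C\lambda\|U\|_s\|N\|_s\|\nabla\Phi_\alpha\|+\frac2\lambda\|E\|_s\|U_\alpha\|\|\nabla\Phi_\alpha\|\le\frac C\lambda\|E\|_s\cdot\|E\|_s\|\nabla\Phi\|_s\le C\mathcal D\,(\|E\|_s^2+\|\nabla\Phi\|_s^2).
\end{align*}
The term $C(1+\|E\|_s)\|E_\alpha\|^2$ is at most $C(1+\mathcal D)\|E\|_s^2\le C(1+\mathcal D^s)\|E\|_s^2$, and the last term $\lambda^2$ is kept as is. Since $\mathcal D\le 1+\mathcal D^s$, all of these are of the claimed form.

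The substance is the two remainder terms $C\|E_\alpha\|\|R^1_\alpha\|$ and $C\|E_\alpha\|\|R^2_\alpha\|$; because $\|E_\alpha\|\le\|E\|_s$, it is enough to prove $\|R^1_\alpha\|+\|R^2_\alpha\|\le C_T(1+\mathcal D^s)\|E\|_s$. Here one uses that, on the interval $[0,\min\{T_\lambda,T_{\ast\ast}\}]$ under consideration, $n^\lambda$ stays in $[\frac12,\frac32]$, so $h'(n^\lambda)$, $1/n^\lambda$ and the entries of $A_i$ are smooth functions of $W$ on the relevant region; writing $A_i(W^\lambda)-A_i(W_\lambda)=\widetilde A_i(W^\lambda,W_\lambda)E$ with $\widetilde A_i$ smooth, the Moser product estimate (part~1 of the calculus Lemma) gives
\begin{align*}
  \|R^1_\alpha\|\le C\sum_{i=1}^3\big\|\widetilde A_i(W^\lambda,W_\lambda)E\big\|_s\,\|\partial_{x_i}W_\lambda\|_s\le C\sum_{i=1}^3\|\widetilde A_i(W^\lambda,W_\lambda)\|_s\,\|E\|_s\,\|W_\lambda\|_{s+1},
\end{align*}
while for the commutator $R^2_\alpha$ the commutator estimate (part~2 of the calculus Lemma) with $f=A_i(W^\lambda)$, $g=\partial_{x_i}E$ gives
\begin{align*}
  \|R^2_\alpha\|\le C\sum_{i=1}^3\big(\|D A_i(W^\lambda)\|_\infty\|E\|_s+\|\partial_{x_i}E\|_\infty\|A_i(W^\lambda)\|_s\big).
\end{align*}
The surviving factors $\|\widetilde A_i(W^\lambda,W_\lambda)\|_s$, $\|A_i(W^\lambda)\|_s$ and $\|DA_i(W^\lambda)\|_\infty$ are all bounded, by the composition estimate (part~4 of the calculus Lemma) and $H^{s-1}\hookrightarrow L^\infty$, by $C(1+\|W^\lambda\|_s^s+\|W_\lambda\|_s^s)\le C_T(1+\mathcal D^s)$, using $\|W^\lambda\|_s\le C_T(1+\mathcal D)$ and $\|W_\lambda\|_{s+1}\le C_T$; combined with $\|\partial_{x_i}E\|_\infty\le C\|E\|_s$ and $\|\partial_{x_i}W_\lambda\|_s\le C_T$ this yields $\|R^1_\alpha\|+\|R^2_\alpha\|\le C_T(1+\mathcal D^s)\|E\|_s$, hence $C\|E_\alpha\|(\|R^1_\alpha\|+\|R^2_\alpha\|)\le C_T(1+\mathcal D^s)\|E\|_s^2$. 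Collecting the seven estimates proves \eqref{lem61}.

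I expect the bookkeeping in the last paragraph to be the only genuine obstacle: one has to check that the composition estimate, applied to the nonlinear functions $h'(n^\lambda)$ and $1/n^\lambda$ built into $A_i$, produces exactly $s$ (and not more) powers of $\|W^\lambda\|_s$ — this is where the a priori bound $n^\lambda\in[\frac12,\frac32]$ is used, keeping the relevant coefficients under control — and that every high-order contribution in $R^1_\alpha,R^2_\alpha$ is genuinely paired with a factor $\|E\|_s$ (through the difference $A_i(W^\lambda)-A_i(W_\lambda)$ or through $\partial_{x_i}E$), so the final bound is truly quadratic in $(\|E\|_s,\|\nabla\Phi\|_s)$ up to the harmless polynomial weight $1+\mathcal D^s$. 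The soft terms and the passage $\mathcal D,\|E\|_s\le 1+\mathcal D^s$ are routine.
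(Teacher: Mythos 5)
Your proposal is correct and follows essentially the same route as the paper: start from Lemma \ref{lem5}, absorb the $1/\lambda$ factors via $\|E\|_s/\lambda\le\mathcal D$, and reduce everything to $\|R^1_\alpha\|+\|R^2_\alpha\|\le C(1+\mathcal D^s)\|E\|_s$ via the Moser product, commutator and composition estimates together with the a priori bounds $n^\lambda\in[\frac12,\frac32]$ and $\|W_\lambda\|_{s+1}\le C_T$. The only difference is presentational: the paper estimates $R^1_\alpha$ and $R^2_\alpha$ entry by entry on the matrices $A_i$, whereas you use the Taylor-type factorization $A_i(W^\lambda)-A_i(W_\lambda)=\widetilde A_i(W^\lambda,W_\lambda)E$, which is equivalent.
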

\begin{proof}
It is obviously that
\begin{align}
    &\frac{C}{\lambda}\|U\|_s\|N\|_s\|\nabla\Phi_\alpha\|\leq C
\mathcal {D}(\|E\|_s^2+\|\nabla\Phi\|^2_s),\label{171}\\
&\frac{2}{\lambda}\|E\|_s\|U_\alpha\|\| \nabla\Phi_\alpha\|\leq C
\mathcal {D}(\|E\|_s^2+\|\nabla\Phi\|^2_s),\label{172}\\
&(1+\|E\|_s)\|E_\alpha\|^2\leq C(1+\|\mathcal
{D}\|^s_s)\|E\|^2_s.\label{173}
\end{align}
Next we estimate $\|E_\alpha\|\|R^1_\alpha\|$. We use  the
boundedness of $\|(n_\lambda,u_\lambda,B_\lambda)\|_{s+1}
=\|(1,u^0,B^0)\|_{s+1}$ indicated in Proposition \ref{local}  to
conclude that
\begin{align*}
    \|R^1_\alpha\|&\leq C\sum^{3}_{i=1}\|u^\lambda_i-u_{\lambda i}\|_{|\alpha|}
    \|\partial_{x_{i}}W_\lambda\|_s+C\sum^{3}_{i=1}\|h'(n^\lambda)-h'(n_{\lambda})\|_{|\alpha|}
    \|\partial_{x_{i}}W_\lambda\|_s\\
    &\quad+C\sum^{3}_{i=1}\|n^\lambda-n_{\lambda}\|_{|\alpha|}
    \|\partial_{x_{i}}W_\lambda\|_s
    +C\sum^{3}_{i=1}\left\|\frac{G^\lambda_i}{n^\lambda}
    -\frac{G_{\lambda i}}{n_\lambda}\right\|_{|\alpha|}
    \|\partial_{x_{i}}W_\lambda\|_s\\
    &\leq C\|U\|_{|\alpha|}+C(1+\mathcal
    {D}^s)\|N\|_{|\alpha|}+C\|N\|_{|\alpha|}+C(1+\mathcal
    {D}^s)(\|N\|_{|\alpha|}+\|B\|_{|\alpha|})\\
    &\leq C(1+\mathcal
    {D}^s)\|E\|_s.
\end{align*}
In a similar spirit, $\|R^2_\alpha\|$  is estimated as
\begin{align*}
    \|R^2_\alpha\|&\leq C\sum^{3}_{i=1}\|u^\lambda_i\partial_{x_{i}}E_\alpha
    -(u_{\lambda i}\partial_{x_{i}}E)_\alpha\|\\&\quad
   +C\sum^{3}_{i=1}\|h'(n^\lambda)\partial_{x_{i}}N_\alpha-(h'(n^{\lambda})\partial_{x_{i}}N)_\alpha\|\\
    &\quad+C\sum^{3}_{i=1}\|n^\lambda\partial_{x_{i}}U_\alpha
    -(n^{\lambda}\partial_{x_{i}}U)_\alpha\|
    \\&\quad+C\sum^{3}_{i=1}\left\|\frac{(G^\lambda_i)^T}{n^\lambda}\partial_{x_{i}}H_\alpha
    -\left(\frac{(G^{\lambda}_{i} )^T}{n_\lambda}
    \partial_{x_{i}}H\right)_\alpha\right\|\\
    &\quad+C\sum^{3}_{i=1}\left\|\frac{G^\lambda_i}{n^\lambda}\partial_{x_{i}}U_\alpha
    -\left(\frac{G^{\lambda}_{i}}{n_\lambda}
    \partial_{x_{i}}U\right)_\alpha\right\|
    \\
    &\leq
    C\sum^{3}_{i=1}\|u^\lambda_i\|_s\|\partial_{x_{i}}E\|_{|\alpha|-1}
   +C\sum^{3}_{i=1}\|h'(n^\lambda)\|_s\|\partial_{x_{i}}N\|_{|\alpha|-1}\\
  &\quad +C\sum^{3}_{i=1}\|n^\lambda\|_s\|\partial_{x_{i}}U\|_{|\alpha|-1}
    +C\sum^{3}_{i=1}\left\|\frac{(G^\lambda_i)^T}{n^\lambda}\right\|_s\|\partial_{x_{I}}H\|_{|\alpha|-1}
    \\
    &\quad+C\sum^{3}_{i=1}\left\|\frac{G^\lambda_i}{n^\lambda}\right\|_s\|\partial_{x_{i}}U\|_{|\alpha|-1}
   \\&\leq C\sum^{3}_{i=1}(\|u^\lambda_i-u_{\lambda i}\|_s
   +\|u_{\lambda i}\|_s)\|E\|_{|\alpha|}
   +C(1+\|N\|_s^s)\|N\|_{|\alpha|}\\
  &\quad +C(\|n^\lambda-n_\lambda\|_s+\|n_\lambda\|_s)\|U\|_{|\alpha|}
    +C(1+\|E\|_s^s)\|E\|_{|\alpha|}
    \\&\leq C(1+\mathcal {D}^s)\|E\|_{s}.
\end{align*}This completes
the proof of Lemma \ref{lemma6}.
\end{proof}

Note that $C^{-1}\|E_\alpha\|^2\leq \int_{\r} E^T_\alpha
   A_0(n^\lambda)E_\alpha
dx\leq C \|E_\alpha\|^2$.  We integrate \eqref{lem61} from $0$ to
$t$ with $[0,t]\subset[0,\min\{T_\lambda,T_{\ast\ast}\}]$ to obtain
\begin{align*}
    \|E_\alpha(t)\|^2+\|\nabla \Phi_{\alpha}(t)\|^2\leq C\int_0^t(1+\mathcal {D}^s)
    (\|E\|^2_{s}+\|\nabla\Phi\|^2_s)d\tau+C\lambda^2.
\end{align*}
Here we have used the fact that the initial data constructed in
Theorem \ref{thm}. Summing up the last inequality over all ¦Á
satisfying $|\alpha|\leq s$, we get
\begin{align}
    \|E(t)\|^2_s+\|\nabla \Phi(t)\|^2_s\leq C\int_0^t(1+\mathcal {D}^s)
    (\|E\|^2_{s}+\|\nabla\Phi\|^2_s)d\tau+CT_{\ast\ast}\lambda^2.\label{179}
\end{align}
Applying Gronwall's lemma to \eqref{179},we obtain
\begin{align}
    \|E(t)\|^2_s+\|\nabla \Phi(t)\|^2_s
    \leq CT_{\ast\ast}\lambda^2e^{C\int_0^t(1+\mathcal {D}^s)d\tau}.
    \label{180}
\end{align}
In view of  $\|E\|_s+\|\nabla\Phi\|_s=\lambda\mathcal {D}$, it
follows from \eqref{180} that
\begin{align}
    D^2(t)
    \leq CT_{\ast\ast}e^{C\int_0^t(1+\mathcal {D}^s)d\tau}\equiv\Gamma(t).
    \label{181}
\end{align}
Thus, it holds that
\begin{align*}
   \Gamma'(t)
    = C(1+\mathcal {D}^s)\Gamma(t)\leq C\Gamma(t)+C\Gamma^{\frac{s+2}{2}}(t).
\end{align*}

Applying the nonlinear Gronwall-type inequality in \cite{y99} to the
last inequality yields $$\Gamma(t)\leq e^{CT_{\ast\ast}}$$ for
$t\in[0,\min\{T_\lambda,T_{\ast\ast}\}]$ if we choose $T_{\ast\ast}>0$ (independent of $\lambda$) so small
that
$$\Gamma(0)=CT_{\ast\ast}<e^{-CT_{\ast\ast}}.$$
Then, because of \eqref{181}, there exists a positive constant $M$,
independent of $\lambda$, such that
\begin{align}\label{186}
\mathcal {D}(t)\leq M
\end{align}
  for $t\in[0,\{T_\lambda,T_{\ast\ast}\}]$.
Finally,  from \eqref{180},\eqref{186} and the definition of
$(E,\nabla\Phi)$, we conclude the proof of Theorem \ref{thm}.


\vspace{.2in} \noindent { Acknowledgments :}

 J. Yang's research was partially
supported by the Joint Funds of the National Natural Science
Foundation of China (Grant No. U1204103).



\begin{thebibliography}{aaa}
\bibitem{b00} Y. Brenier, Convergence of the Vlasov-Poisson system to the
incompressible Euler equations, \emph{Comm. Partial Differential
Equations}, 25 (2000), 737-754.



\bibitem{cg00}S. Cordier and E. Grenier, Quasineutral limit of an Euler-Poisson system arising
from plasma physics, \emph{Comm. Partial Differential Equations},
25(2000), 1099-1113.

\bibitem{cg68} J. P. Cox, R. T. Giuli,  \emph{Principles of stellar structure,
I.,II.}, Gordon and Breach, New York, 1968.



\bibitem{dl02} J. I. D\'{\i}az and M. B. Lerena, On the inviscid and non-resistive limit
for the equations of incompressible magnetohydrodynamics,
\emph{Math. Models Meth. Appl. Sci.} 12 (2002), 1401-1419.

\bibitem{dm12} D. Donatelli,  P. Marcati,    Analysis of oscillations and
defect measures for the quasineutral limit in plasma physics,
\emph{Arch. Ration. Mech. Anal.}, 206 (2012), 159-188.

\bibitem{fls15} P. Federbush, T. Luo, J. Smoller, Existence of Magnetic Compressible
Fluid Stars, \emph{Arch. Rational Mech. Anal.} 215 (2015), 611-631.

\bibitem{glms01} I. Gasser, C. D.  Levermore, P. Markowich, C. Schmeiser,
The initial time layer problem and the quasineutral limit in the
semiconductor drift-diffusion model,  \emph{Europ. J. Appl. Math.}
12 (2001), 497-512.


\bibitem{ghmw02}
I. Gasser, L. Hsiao, P. Markowich, S. Wang,  Quasineutral limit of a
nonlinear drift-diffusion model for semiconductor models,  \emph{J.
Math. Anal. Appl.} 268 (2002), 184-199.


\bibitem{jp01}A.  J\"{u}ngel, Y. J. Peng,  A hierarchy of hydrodynamic models
for plasmas: Quasi-neutral limits in the drift-diffusion equations.
\emph{Asympt. Anal.} 28 (2001), 49-73.


\bibitem{jll09} Q. C. Ju, F. C. Li, H. L. Li,   The quasineutral limit of
compressible Navier-Stokes-Poisson system with heat conductivity and
general initial data, \emph{J. Differential Equations}, 247 (2009),
203-224.


\bibitem{g95} E. Grenier,   Defect measures of the Vlasov-Poisson
system, \emph{Comm. Partial Differential Equations} 20(1995),
1189-1215.

\bibitem{g96} E. Grenier,  Oscillations in quasineutral plasmas. \emph{Comm. Partial Differential
Equations}, 21 (1996), 363-394.

\bibitem{g97} E. Grenier,  Pseudodifferential estimates of singular
perturbations, \emph{Commun. Pure Appl. Math.} 50 (1997), 821-865.





\bibitem{ll05} H.  L. Li, C. K. Lin, Zero Debye length asymptotic of the quantum
hydrodynamic model for semiconductors, \emph{Comm. Math. Phys.} 256
(2005), 195-212.

\bibitem{l08} Y. P. Li,    Convergence of the nonisentropic Euler-Poisson
equations to incompressible type Euler equations. \emph{J. Math.
Anal. Appl.}342 (2008), 1107-1125.

\bibitem{ljx15} Y. Li, Q. C. Ju, W. Q. Xu, Quasi-neutral limit of the full
Navier-Stokes-Fourier-Poisson system, \emph{J. Differential
Equations}, 258 (2015),  3661-3687

\bibitem{l05} G. Loeper, Quasi-neutral limit of the Euler-Poisson and
Euler-Monge-Amp\`{e}re systems, \emph{Comm. Partial Differential
Equations}, 30 (2005), 1141-1167.



\bibitem{majda} A.~Majda,  \emph{Compressible Fluid Flow and Systems of Conservation
Laws in Several Space Variables},  Springer-Verlag, New-York, 1984.

\bibitem{m01}N.  Masmoudi, From Vlasov-Poisson system to the
incompressible Euler system, \emph{Comm. Partial Differential
Equations}, 26
 (2001), 1913-1928.



 \bibitem{pwy06} Y. J. Peng, Y. G.  Wang, W. A. Yong,  Quasi-neutral limit of
the non-isentropic Euler-Poisson systems, \emph{Proc. Roy. Soc.
Edinburgh}, A136  (2006), 1013-1026.



\bibitem{st04} S. H. Shapiro, S. A. Teukolsky, \emph{Black Holes, White Dwarfs, and
Neutron Stars}, WILEY-VCH, 2004.



\bibitem{s70}E. Stein, \emph{Singular Integrals and Differentiability}, Princeton,
Princeton Univ. Press, New Jersey, 1970.




\bibitem{w04} S. Wang,   Quasineutral limit of Euler-Poisson system with and
without viscosity, \emph{Comm. Partial Differential Equations}, 29
(2004), 419-456.

\bibitem{ws06} S. Wang, S. Jiang, The convergence of the Navier-Stokes-Poisson
system to the incompressible Euler equations, \emph{Comm. Partial
Differential Equations}, 31 (2006), 571-591.

\bibitem{y99} W. A. Yong, Singular perturbations of first-order hyperbolic systems
with stiff source terms, \emph{J. Differential Equations}, 155
(1999), 89-132.


\bibitem{y2001}W. A. Yong, Basic aspects of hyperbolic relaxation systems, in
Advances in the Theory of Shock Waves, H. Freist\"{u}hler and A.
Szepessy, eds., Progr. \emph{Nonlinear Differential Equations Appl.}
47, Birkh\"{a}user Boston, Boston, 2001,  259-305.

\end{thebibliography}
\end{document}